\newtheorem{thm}{Theorem}[section]
\newtheorem{cor}[thm]{Corollary}
\newtheorem{prop}[thm]{Proposition}
\date{}
\newcommand{\dlines}{\displaylines}
\newcommand{\field}[1]{\mathbb{#1}}
\newcommand{\R}{\field{R}}
\renewcommand{\th}{\theta}
\newcommand{\X}{\mathscr{X}}
\newcommand{\bS}{\field{S}}
\newcommand{\C}{\field{C}}
\newcommand{\cS}{{\mathcal S}}
\def\tr{\mathop{\rm tr}}
\def\cS{{\field S}}
\def\E{{\mathbb{ E}}}
\def\P{{\mathbb{P}}}
\def\paref#1{(\ref{#1})}
\newtheorem{theorem}{Theorem}[section]
\newtheorem{rema}[theorem]{Remark}
\newtheorem{lemma}[theorem]{Lemma}
\newtheorem{remark}[theorem]{Remark}
\numberwithin{equation}{section}
\begin{document}

\title[On L\'evy's Brownian motion]{On L\'evy's Brownian motion indexed by the elements of
compact groups\footnote{R\lowercase{esearch supported by \uppercase{ERC} grant 277742} {\it P\lowercase{ascal}}.}}

\author[P. Baldi]{Paolo Baldi}
\address{Dipartimento di Matematica\\ Universit\`a
di Roma {\it Tor Vergata}\\
00133 Roma, Italy}
\email{baldi@mat.uniroma2.it}

\author[M. Rossi]{Maurizia Rossi}
\address{Dipartimento di Matematica\\ Universit\`a
di Roma {\it Tor Vergata}\\
00133 Roma, Italy}
\email{rossim@mat.uniroma2.it}

\begin{abstract}
\noindent We investigate positive definiteness of the Brownian kernel
$K(x,y)=\frac{1}{2} \,( d(x,x_0) + d(y,x_0) - d(x,y) )$ on a
compact group $G$ and in particular for  $G=SO(n)$.
\end{abstract}

\subjclass[2010]{Primary 43A35; Secondary 60G60, 60B15}

\keywords{ positive definite functions, Brownian motion, compact groups}

\maketitle

\section{Introduction}
In 1959 P.L\'evy \cite{bib:L} asked the question of the existence of a process $X$ indexed by the points of
a metric space $(\X,d)$ and generalizing the Brownian motion, i.e. of a real Gaussian process which would be centered, vanishing at some point $x_0\in\X$ and such that $\E(|X_x-X_y|^2)=d(x,y)$. By polarization, the covariance function of such a process would be
\begin{equation}\label{kernel MB}
K(x,y)=\frac{1}{2} \,( d(x,x_0) + d(y,x_0) - d(x,y) )
\end{equation}
so that this question is equivalent to the fact that the kernel $K$ is positive definite. Positive
definiteness of $K$ for $\X=\R^m$ and $d$ the Euclidean metric had been proved by Schoenberg \cite{MR1501980} in 1938 and P.L\'evy itself
constructed the Brownian motion on $\X=\bS^{m-1}$, the euclidean sphere of $\R^m$,
 $d$ being the distance along the geodesics.
Later Gangolli \cite{GAN:60} gave an analytical proof of the positive definiteness of the kernel \paref{kernel MB} for the same metric space $(\bS^{m-1},d)$, in a paper that dealt with this question for a large class of homogeneous spaces.

Finally Takenaka in \cite{bib:TKU} proved the positive definiteness of the kernel \paref{kernel MB} for the Riemannian metric spaces
of constant sectional curvature equal to $-1,0$ or $1$, therefore adding the hyperbolic disk to the list. To be precise in the case of the hyperbolic space
$\mathcal{H}_m = \lbrace (x_0, x_1, \dots, x_m)\in \R^{m+1} :
x_1^2 + \dots x_m^2 - x_0^2 = 1 \rbrace $, the distance under consideration  is the unique, up to  multiplicative
constants, Riemannian distance that is invariant with respect to the action of $G=L_m$, the Lorentz group.

In this short note we investigate this question for the cases
$\X=SO(n)$. The answer is that the kernel \paref{kernel MB} is not
positive definite on $SO(n)$ for $n>2$. This is somehow surprising
as, in particular, $SO(3)$ is locally isometric to $SU(2)$, where
positive definiteness of the kernel $K$ is immediate as shown below.

We have been led to the question of the existence of the Brownian motion indexed by the elements of these groups - in particular of $SO(3)$ - in connection with the analysis
and the modeling of
the Cosmic Microwave Background
which has become recently an active research field  (see \cite{bib:LS}, \cite{bib:M}, \cite{dogiocam}, \cite{MR2867816} e.g.)
and that has attracted the attention to the study of
random fields (\cite{BM06}, \cite{MR2342708}, \cite{Pepy} e.g.).
More precisely, in the modern cosmological models the CMB is seen as the realization of an invariant random field in a vector bundle over the sphere $\cS^2$ and the analysis of its components (the polarization e.g.)
requires the \emph{spin} random fields theory.
This
leads naturally to the investigation of invariant random fields on $SO(3)$
enjoying particular properties and therefore to the question
of the existence of a privileged random field i.e. L\'evy's Brownian random field on $SO(3)$.

In \S\ref{elem} we recall some elementary facts about invariant distances and positive definite kernels. In
\S\ref{sud} we treat the case $G=SU(2)$, recalling well known facts about the invariant distance and Haar measure of this group.
Positive definiteness of $K$ for $SU(2)$ is just a simple remark, but these facts are needed in
\S\ref{sot} where we treat the case $SO(3)$ and deduce from the case $SO(n)$, $n\ge 3$.
\section{Some elementary facts}\label{elem}
In this section we recall some well known facts about Lie groups (see mainly
\cite{faraut} and also \cite{MR2088027}, \cite{sugiura}).
\subsection{Invariant distance of a compact Lie group}
From now on we denote by $G$ a compact Lie group. It is well known that  $G$ admits {at least} a bi-invariant Riemannian metric
(see \cite{MR2088027} p.66 e.g.), that
we shall denote $\lbrace \langle \cdot, \cdot \rangle_g \rbrace_{g\in G}$ where of course
$\langle \cdot, \cdot \rangle_g$ is the inner product defined on the tangent space $T_g G$ to the manifold $G$ at $g$ and the family $\lbrace \langle \cdot, \cdot \rangle_g \rbrace_{g\in G}$ smoothly depends on $g$. By the bi-invariance property, for $g\in G$ the diffeomorphisms
$L_g$ and $R_g$ (resp. the left multiplication and the right multiplication of the group) are isometries.
Since the tangent space $T_g G$ at any point $g$ can be translated to the tangent space $T_e G$ at the identity element $e$ of the group, the metric $\lbrace \langle \cdot, \cdot \rangle_g \rbrace_{g\in G}$ is completely characterized by $\langle \cdot, \cdot \rangle_e$.
Moreover, $T_e G$ being the Lie algebra $\mathfrak g$ of $G$, the bi-invariant metric corresponds to an inner product $\langle \cdot, \cdot \rangle$ on
$\mathfrak g$ which is invariant under the adjoint representation $Ad$ of $G$. Indeed there is a one-to-one correspondence between bi-invariant Riemannian metrics on $G$ and $Ad$-invariant inner products on
$\mathfrak g$. If in addition $\mathfrak g$ is {semisimple}, then the negative Killing form
of $G$ is an $Ad$-invariant inner product on $\mathfrak g$ itself.

If there exists a unique
(up to a multiplicative factor) bi-invariant metric on $G$ (for a sufficient condition see
\cite{MR2088027}, Th. $2.43$) and $\mathfrak g$ is semisimple, then
this metric is necessarily proportional to the negative Killing form of $\mathfrak g$. It is well known that this is the case for $SO(n), (n\ne 4)$ and $SU(n)$; furthermore, the (natural) Riemannian metric on $SO(n)$ induced by the embedding
$SO(n) \hookrightarrow \R^{n^2}$ corresponds to the negative Killing form of ${so}(n)$.

Endowed with this bi-invariant Riemannian metric, $G$ becomes a {metric space}, with a distance $d$ which is
bi-invariant. Therefore the function $g\in G \to d(g,e)$ is a class function as
\begin{equation}\label{cfunction}
d(g,e)=d(hg,h)=d(hgh^{-1},hh^{-1})=d(hgh^{-1},e), \qquad g,h\in G\ .
\end{equation}
It is well known that {geodesics} on $G$ through the identity $e$
are exactly the one parameter subgroups of $G$ (see \cite{MR0163331} p.113 e.g.), thus a geodesic from $e$
is the curve on $G$
\begin{equation*}
\gamma_X(t) : t\in [0,1] \to \exp(tX)
\end{equation*}
for some $X\in \mathfrak g$. The length of  this geodesic is
\begin{equation*}
L(\gamma_X) = \| X \| = \sqrt{\langle X, X \rangle}\ .
\end{equation*}
Therefore
\begin{equation*}\label{distanza}
d(g,e) = \inf_{X\in \mathfrak g: \exp X=g} \| X \|\ .
\end{equation*}

\subsection{Brownian kernels on a metric space}

Let $(\X, d)$ be a metric space.
\begin{lemma}\label{fondamentale}
The kernel $K$ in (\ref{kernel MB}) is positive definite on $\X$ if
and only if  $d$ is a restricted negative definite kernel, i.e., for
every choice of elements $x_1,  \dots, x_n\in \X$ and of complex
numbers $\xi_1, \dots, \xi_n$ with $\sum_{i=1}^n \xi_i =0$
\begin{equation}\label{neg def}
\sum_{i,j=1}^n d(x_i,x_j) \xi_i \overline{\xi_j} \le 0\ .
\end{equation}
\end{lemma}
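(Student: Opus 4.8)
The plan is to compute the Hermitian form attached to $K$ directly and match it against the form attached to $d$. For arbitrary $x_1,\dots,x_n\in\X$ and $\xi_1,\dots,\xi_n\in\C$, grouping the three summands of \eqref{kernel MB} gives the identity
\[
\sum_{i,j=1}^n K(x_i,x_j)\,\xi_i\overline{\xi_j}
=\frac12\Big(A\,\overline S+\overline A\,S-\sum_{i,j=1}^n d(x_i,x_j)\,\xi_i\overline{\xi_j}\Big),
\]
where I abbreviate $S=\sum_{i=1}^n\xi_i$ and $A=\sum_{i=1}^n d(x_i,x_0)\,\xi_i$; here I use that $d$ is real-valued, so that $\sum_i d(x_i,x_0)\overline{\xi_i}=\overline A$. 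Everything then reduces to reading off this one identity under the appropriate constraints.

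For the implication ``$K$ positive definite $\Rightarrow$ $d$ restricted negative definite'' I would simply specialize the identity to coefficients satisfying the constraint $S=0$. Then the two cross terms $A\,\overline S$ and $\overline A\,S$ drop out and the left-hand side equals $-\tfrac12\sum_{i,j}d(x_i,x_j)\xi_i\overline{\xi_j}$; positivity of the left-hand side is then exactly \eqref{neg def}.

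The converse is the more interesting direction and is where the one genuinely clever step lies. Given an \emph{arbitrary} family $x_1,\dots,x_n$ and coefficients $\xi_1,\dots,\xi_n$, with no constraint on $S$, I would enlarge the configuration by adjoining the base point $x_0$ with the compensating weight $\xi_0:=-S$, so that the augmented weights $\xi_0,\xi_1,\dots,\xi_n$ now sum to zero and the hypothesis \eqref{neg def} applies to them. Expanding $\sum_{i,j=0}^n d(x_i,x_j)\xi_i\overline{\xi_j}\le 0$ and using $d(x_0,x_0)=0$, the pairs involving the index $0$ contribute $-\,\overline A\,S$ (from the pairs $(0,j)$) and $-\,A\,\overline S$ (from the pairs $(i,0)$), so the inequality rearranges to $\sum_{i,j=1}^n d(x_i,x_j)\xi_i\overline{\xi_j}\le A\,\overline S+\overline A\,S$. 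Substituting this bound into the displayed identity yields $\sum_{i,j}K(x_i,x_j)\xi_i\overline{\xi_j}\ge 0$, which is the positive definiteness of $K$.

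The computation is elementary throughout; the only point requiring care is the bookkeeping of the cross terms in the augmented sum, i.e.\ checking that the index-$0$ pairs produce precisely $-(A\,\overline S+\overline A\,S)$. The substantive idea — recognizing that the single degree of freedom removed by the constraint $\sum\xi_i=0$ is exactly the one recovered by ``spending'' it on the point $x_0$ — is what makes both directions collapse to the same identity, so I expect no real obstacle beyond this observation.
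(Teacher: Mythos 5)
Your proof is correct and follows essentially the same route as the paper: the same polarization identity relating the Hermitian form of $K$ to that of $d$, and the same key trick of adjoining the base point $x_0$ with compensating weight $\xi_0=-\sum_i\xi_i$ for the converse. The only cosmetic difference is that you expand the augmented $d$-form and track the cross terms explicitly, whereas the paper notes instead that the augmented $K$-form is unchanged because $K(x_i,x_0)=K(x_0,x_j)=0$; these are equivalent bookkeepings of the same computation.
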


\begin{proof}
For every $x_1, \dots, x_n\in \X$ and complex numbers $\xi_1,\dots,
\xi_n$
\begin{equation}\label{eq1}
\sum_{i,j} K(x_i,x_j) \xi_i \overline{\xi_j} = \frac{1}{2} \Bigl(
\overline{a} \sum_i d(x_i, x_0)\xi_i  + a \sum_j d(x_j,
x_0)\overline{\xi_j}- \sum_{i,j}d(x_i, x_j)\xi_i \overline{\xi_j}
\Bigr)
\end{equation}
where $a:= \sum_i \xi_i$.  If  $a=0$ then it is immediate that in (\ref{eq1}) the l.h.s. is $\ge 0$ if and only if the r.h.s. is $\le 0$. Otherwise set
$\xi_{0}:=-a$ so that $\sum_{i=0}^n \xi_i =0$. The following equality
\begin{equation}
\sum_{i,j=0}^{n} K(x_i,x_j) \xi_i \overline{\xi_j} =
\sum_{i,j=1}^n K(x_i,x_j) \xi_i \overline{\xi_j}
\end{equation}
is then easy to check, keeping in mind that $K(x_i,x_0)=K(x_0,
x_j)=0$, which finishes the proof.
\end{proof}
For a more general proof see \cite{GAN:60} p. $127$ in the proof of Lemma 2.5.

If $\X$ is the homogeneous space of some topological group $G$, and
$d$ is a $G$-invariant distance, then (\ref{neg def}) is satisfied
if and only if for every choice of elements $g_1,\dots,g_n\in G$ and
of complex numbers $\xi_1,\dots, \xi_n$ with $\sum_{i=1}^n \xi_i =0$
\begin{equation}\label{neg def2}
\sum_{i,j=1}^n d(g_ig_j^{-1}x_0,x_0) \xi_i \overline{\xi_j} \le 0
\end{equation}
where $x_0\in \X$ is a fixed point. We shall say that
the function $g\in G \to d(gx_0, x_0)$ is restricted negative definite on $G$ if it satisfies (\ref{neg def2}).

In our case of interest $\X=G$  a compact  (Lie) group  and
$d$ is a bi-invariant distance as in $\S 2.1$. The Peter-Weyl development (see \cite{faraut} e.g.) for the class function $d(\cdot,e)$ on $G$ is
\begin{equation}\label{PW dev}
d(g,e)= \sum_{\ell \in \widehat G} \alpha_\ell \chi_\ell(g)
\end{equation}
where $\widehat G$ denotes the family of equivalence classes of irreducible representations of $G$ and $\chi_\ell$  the character of the
$\ell$-th irreducible representation of $G$.

\begin{remark}\label{coeff neg}\rm
A function $\phi$ with a development as in (\ref{PW dev}) is
restricted negative definite if and only if $\alpha_\ell \le 0$ but
for the trivial representation.

Actually note first that, by standard approximation arguments,
$\phi$ is restricted negative definite if and only if for every
continuous function $f:G\to \C$ with $0$-mean (i.e. orthogonal to
the constants)
\begin{equation}\label{neg def measure}
\int_G\int_G \phi(gh^{-1}) f(g)\overline{f(h)}\,dg\, dh \le 0
\end{equation}
$dg$ denoting the Haar measure of $G$.
Choosing $f=\chi_\ell$ in the l.h.s. of
(\ref{neg def measure}) and denoting $d_\ell$ the dimension of the corresponding representation, a straightforward computation gives
\begin{equation}\label{semplice}
\int_G\int_G \phi(gh^{-1}) \chi_\ell(g)\overline{\chi_\ell(h)}\,dg\,
dh = \frac{\alpha_\ell}{d_\ell}
\end{equation}
so that if $\phi$ restricted negative definite, $\alpha_\ell\le 0$
necessarily.

Conversely, if $\alpha_\ell \le 0$ 
but for the
trivial representation, then $\phi$ is restricted negative definite,
as the characters $\chi_\ell$'s are positive definite and orthogonal
to the constants.
\end{remark}

\section{$SU(2)$}\label{sud}

The special unitary group $SU(2)$ consists of the complex unitary $2\times 2$-matrices $g$ such that
$\det(g)=1$.
Every $g\in SU(2)$ has the form
\begin{equation}\label{matrice}
g= \begin{pmatrix} a  & b \\
 -\overline{b} & \overline{a}
\end{pmatrix}, \qquad a,b\in \C,\, |a|^2 + |b|^2 = 1\ .
\end{equation}
If $a=a_1 + ia_2$ and $b=b_1 + ib_2$, then the map
\begin{align}\label{omeomorfismo}
\Phi(g)=
(a_1, a_2, b_1, b_2)
\end{align}
is an {homeomorphism} (see \cite{faraut}, \cite{sugiura} e.g.) between $SU(2)$ and the unit sphere $\cS^3$
of $\R^4$. Moreover the right translation
\begin{equation*}
R_g : h\to hg, \qquad h,g\in SU(2)
\end{equation*}
of $SU(2)$ is a rotation (an element of $SO(4)$) of $\cS^3$ (identified with $SU(2)$).
The homeomorphism (\ref{omeomorfismo}) preserves the invariant measure, i.e., if $dg$ is the normalized Haar measure
on $SU(2)$, then $\Phi(dg)$ is the normalized Lebesgue measure on $\cS^3$.
As the $3$-dimensional polar coordinates on $\cS^3$ are
\begin{equation}\label{polar coord}
\begin{array}{l}
a_1=\cos \theta,\cr
a_2= \sin \theta\,\cos \varphi,\cr
b_1= \sin \theta\,\sin \varphi\,\cos \psi,\cr
b_2= \sin \theta\,\sin \varphi\,\sin \psi\ ,
\end{array}
\end{equation}
$(\theta, \varphi, \psi) \in [0,\pi] \times [0,\pi]\times [0,2\pi]$, the normalized Haar integral of $SU(2)$ for an integrable function $f$ is
\begin{equation}\label{int}
\int_{SU(2)} f(g)\,dg = \frac{1}{2\pi^2}\int_0^\pi\sin \varphi\,  d\varphi\,
\int_0^\pi \sin^2 \theta\,d\theta\, \int_0^{2\pi}
f(\theta, \varphi, \psi)\, d\psi
\end{equation}
The bi-invariant Riemannian metric on $SU(2)$ is necessarily proportional to the negative
Killing form  of its Lie algebra ${su(2)}$  (the real vector space of
$2\times 2$ anti-hermitian complex matrices).
We consider the bi-invariant metric corresponding to the $Ad$-invariant inner product on ${su(2)}$
\begin{equation*}
\langle X, Y \rangle= -\frac12\,{\tr(XY)},\qquad X,Y \in {su(2)}\ .
\end{equation*}
Therefore as an orthonormal basis of ${su(2)}$ we can consider the matrices
$$
\dlines{
X_1= \begin{pmatrix} 0  & 1 \\
 -1 & 0
\end{pmatrix}, \quad
X_2= \begin{pmatrix} 0  & i \\
 i & 0
\end{pmatrix}, \quad
X_3 = \begin{pmatrix} i  & 0\\
 0 & -i
\end{pmatrix}
}$$
The homeomorphism (\ref{omeomorfismo}) is actually an isometry between $SU(2)$ endowed with this
distance and $\cS^3$. Hence the restricted negative definiteness of the kernel $d$ on $SU(2)$ is an immediate consequence of this property on $\cS^3$ which is known
to be true as mentioned in the introduction (\cite{GAN:60}, \cite{bib:L}, \cite{bib:TKU}).
In order to develop a comparison with $SO(3)$, we shall give a different proof of this fact in \S\ref{s-final}.
\section{$SO(n)$}\label{sot}

We first investigate the case $n=3$. The group
$SO(3)$ can also be realized as a quotient of $SU(2)$. Actually
the adjoint representation $Ad$ of $SU(2)$
is a surjective morphism from $SU(2)$ onto $SO(3)$ with kernel $\lbrace \pm e \rbrace$ (see \cite{faraut} e.g.).
Hence the well known result
\begin{equation}\label{iso}
SO(3) \cong {SU(2)}/{\lbrace \pm e \rbrace}\ .
\end{equation}
Let us explicitly recall this morphism: 
if $a=a_1 +ia_2, b=b_1 + ib_2$ with $|a|^2 + |b|^2=1$ and
$$
\widetilde g=\begin{pmatrix} a  & b \\
 -\overline{b} & \overline{a}
\end{pmatrix}$$
then the orthogonal matrix $Ad(\widetilde g)$  is given by
\begin{equation}\label{matr}
g=\begin{pmatrix}
a_1^2-a_2^2-(b_1^2-b_2^2)&-2a_1a_2-2b_1b_2&-2(a_1b_1-a_2b_2)\cr
2a_1a_2-2b_1b_2&(a_1^2-a_2^2)+(b_1^2-b_2^2)&-2(a_1b_2+a_2b_1)\cr
2(a_1b_1+a_2b_2)&-2(-a_1b_2+a_2b_1)&|a|^2-|b|^2
\end{pmatrix}
\end{equation}
The isomorphism in (\ref{iso}) might suggest that the
positive definiteness of the Brownian kernel on $SU(2)$ implies
a similar result for $SO(3)$.
This is not true and actually it turns out that the distance $(g,h) \to d(g,h)$ on $SO(3)$
induced by its bi-invariant Riemannian metric \emph{is not
a restricted negative definite kernel} (see Lemma \ref{fondamentale}).

As for $SU(2)$, the bi-invariant Riemannian metric on $SO(3)$ is proportional to the negative Killing form  of its Lie algebra
${ so(3)}$ (the real $3\times 3$ antisymmetric real matrices).
We shall consider the $Ad$-invariant inner product on ${so(3)}$ defined as
\begin{equation*}
\langle A,B \rangle = -\frac12\,{\tr(AB)}\ , \qquad A,B\in {so(3)}\ .
\end{equation*}
An  orthonormal basis for ${so(3)}$ is therefore given by
the matrices
$$\dlines{
A_1= \begin{pmatrix} 0  & 0 & 0\\
 0 & 0 & -1\\
 0 & 1 & 0
\end{pmatrix}, \quad
A_2= \begin{pmatrix} 0  & 0 & 1 \\
 0 & 0 & 0\\
 -1 & 0 & 0
\end{pmatrix}, \quad
A_3 = \begin{pmatrix} 0 & -1  & 0\\
 1 & 0 & 0\\
 0 & 0 & 0
\end{pmatrix}
}$$
Similarly to the case of $SU(2)$, it is easy to compute the distance
from  $g\in SO(3)$ to the identity. Actually $g$ is conjugated to the matrix of the form
\begin{equation*}
\Delta(t)= \begin{pmatrix} \cos t  & \sin t & 0 \\
 -\sin t & \cos t & 0\\
 0 & 0 & 1
\end{pmatrix} = \exp(tA_1)
\end{equation*}
where $t\in [0,\pi]$ is the {rotation angle} of $g$.
Therefore if $d$ still denotes the distance induced by the bi-invariant metric,
\begin{equation*}
d(g,e) = d( \Delta(t), e ) = t
\end{equation*}
i.e. the distance from $g$ to $e$ is the rotation angle of $g$.

Let us denote $\lbrace \chi_\ell \rbrace_{\ell \ge 0}$ the set of characters for $SO(3)$.
It is easy to compute the Peter-Weyl development in (\ref{PW dev}) for $d(\cdot, e)$  as  the
characters $\chi_\ell$ are also simple functions of the rotation angle.
More precisely, if $t$ is the rotation angle of $g$ (see \cite{dogiocam} e.g.),
\begin{equation*}
\chi_\ell(g)= \frac{\sin\frac{(2\ell + 1)t}{2}}{\sin\frac{t}{2}}=1 + 2\sum_{m=1}^\ell \cos(mt)\ .
\end{equation*}
We shall prove that the coefficient
$$
\alpha_\ell=\int_{SO(3)} d(g,e)\chi_\ell(g)\, dg
$$
is positive for some $\ell\ge 1$. As both $d(\cdot,e)$ and $\chi_\ell$ are functions of the rotation angle $t$,
we have
$$
\alpha_\ell=\int_0^\pi t\Bigl( 1 + 2\sum_{j=1}^\ell \cos(jt)\Bigr)\, p_T(t)\, dt
$$
where $p_T$ is the density of $t=t(g)$, considered as a r.v. on the probability space $(SO(3),dg)$. The next statements are devoted to the computation of the density $p_T$. This is certainly well
known but we were unable to find a reference in the literature. We first compute the density of the trace of $g$.
\begin{prop}
The distribution of the trace of a matrix in $SO(3)$ with respect to the normalized Haar measure is given by the density
\begin{equation}\label{trace3}
f(y)=\frac 1{2\pi}\,(3-y)^{1/2}(y+1)^{-1/2}1_{[-1,3]}(y)\ .
\end{equation}
\end{prop}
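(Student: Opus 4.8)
The plan is to transport the computation to $SU(2)$ through the covering morphism $Ad:SU(2)\to SO(3)$ of \paref{iso}. Since a surjective continuous homomorphism between compact groups carries the normalized Haar measure onto the normalized Haar measure, the law of $\tr(g)$ for a Haar-distributed $g\in SO(3)$ equals the law of $\tr(Ad(\widetilde g))$ for $\widetilde g$ Haar-distributed on $SU(2)$. This lets me use directly the parametrizations \paref{polar coord} and \paref{int} already at hand.

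First I would read off the trace of the matrix in \paref{matr}: summing its three diagonal entries and using $a_1^2+a_2^2+b_1^2+b_2^2=1$ collapses everything to
\begin{equation*}
\tr(Ad(\widetilde g))=3a_1^2-(a_2^2+b_1^2+b_2^2)=4a_1^2-1 .
\end{equation*}
Thus the trace depends on $\widetilde g$ only through $a_1=\cos\th$, the first polar coordinate in \paref{polar coord}.

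Next, in the Haar integral \paref{int} the $\varphi$- and $\psi$-integrations factor out for any integrand depending on $\th$ alone, leaving $\th$ the marginal density $\tfrac{2}{\pi}\sin^2\th$ on $[0,\pi]$. Setting $c=\cos\th$, so that $\sin^2\th=1-c^2$ and $\sin\th\,d\th=-dc$, gives $c$ the density $\tfrac{2}{\pi}\sqrt{1-c^2}$ on $[-1,1]$. Finally I would push this law through $y=4c^2-1$, which maps $[-1,1]$ two-to-one onto $[-1,3]$ with preimages $c=\pm\tfrac12\sqrt{y+1}$; since the density of $c$ is even the two branches contribute equally, and with $|dc/dy|=(4\sqrt{y+1})^{-1}$ and $\sqrt{1-c^2}=\tfrac12\sqrt{3-y}$ at each preimage their sum is exactly $\tfrac1{2\pi}(3-y)^{1/2}(y+1)^{-1/2}$ on $[-1,3]$, which is \paref{trace3}.

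The changes of variable are routine; the only conceptual point is the invariance assertion that $Ad$ pushes Haar to Haar, so that the two-to-one nature of the covering introduces no spurious factor. The genuine two-to-one behaviour to be tracked is instead that of the quadratic map $c\mapsto y$, which produces the symmetric sum over the branches $c=\pm\tfrac12\sqrt{y+1}$ and hence the square-root singularity at $y=-1$.
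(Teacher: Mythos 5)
Your proof is correct and follows essentially the same route as the paper's: transport the problem to $SU(2)$ via the morphism \paref{matr}, reduce the trace to $4\cos^2\th-1$, use the marginal density $\tfrac 2\pi\sin^2\th$ from \paref{int}, and finish by a change of variables (you push forward the semicircle law of $c=\cos\th$ through the two-to-one map $y=4c^2-1$, while the paper computes the law of $\cos^2\th$ by differentiating its distribution function — the same computation in different bookkeeping). Your explicit remark that $Ad$ carries normalized Haar measure to normalized Haar measure is a point the paper uses only implicitly, and it is a welcome clarification.
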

\begin{proof}
The trace of the matrix \paref{matr} is equal to
$$
\tr(g)=3a_1^2-a_2^2-b_1^2-b_2^2\ .
$$
Under the normalized Haar measure of $SU(2)$ the vector $(a_1,a_2,b_1,b_2)$ is uniformly distributed on the sphere $\cS^3$.
Recall the normalized Haar integral (\ref{int})
so that, taking the corresponding marginal, $\th$ has density
\begin{equation}\label{denth}
f_1(\th)=\frac 2\pi\,\sin^2(\th)\, d\th\ .
\end{equation}
Now
$$
\dlines{
3a_1^2-a_2^2-b_1^2-b_2^2=4\cos^2\th-1\ .\cr
}
$$
Let us first compute the density of $Y=\cos^2 X$, where
$X$ is distributed according to the density \paref{denth}. This is elementary as
$$
\dlines{
F_Y(t)=\P(\cos^2 X\le t)=
\P(\arccos(\sqrt{t})\le X\le \arccos(-\sqrt{t}))
=\frac 2\pi\!\int\limits_{\arccos(\sqrt{t})}^{\arccos(-\sqrt{t})}
\sin^2(\th)\, d\th\ .\cr
}
$$
Taking the derivative it is easily found that the density of $Y$ is, for $0<t<1$,
$$
\dlines{
F'_Y(t)=\frac 2\pi\,(1-t)^{1/2}t^{-1/2}\ .\cr
}
$$
By an elementary change of
variable the distribution of the trace $4Y-1$ is therefore given by \paref{trace3}.

\end{proof}
\begin{cor}
The distribution of the rotation angle of a matrix in $SO(3)$ is
\begin{equation*}
p_T(t)=\frac1\pi\,(1 - \cos t)\,1_{[0,\pi]}(t)\ .
\end{equation*}
\end{cor}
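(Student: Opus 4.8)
The plan is to derive $p_T$ from the already-established density $f$ of the trace \paref{trace3} by a single one-dimensional change of variable. First I would recall the relation between the trace and the rotation angle: a matrix $g\in SO(3)$ with rotation angle $t$ is conjugate to $\Delta(t)$, and since the trace is a class function, $\tr(g)=\tr(\Delta(t))=1+2\cos t$. This exhibits the trace $y$ as a strictly decreasing smooth function of $t$ on $[0,\pi]$, mapping $[0,\pi]$ bijectively onto $[-1,3]$ with $y(0)=3$ and $y(\pi)=-1$, so the change of variable is legitimate and covers exactly the support appearing in \paref{trace3}.

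Next I would apply the change-of-variable formula for densities. Writing $y=1+2\cos t$ we have $|dy/dt|=2\sin t$, hence $p_T(t)=f(1+2\cos t)\cdot 2\sin t$ for $t\in[0,\pi]$. Substituting the expression \paref{trace3} for $f$ and using the identities $3-y=2(1-\cos t)$ and $y+1=2(1+\cos t)$ gives
$$
p_T(t)=\frac{1}{2\pi}\,\bigl(2(1-\cos t)\bigr)^{1/2}\bigl(2(1+\cos t)\bigr)^{-1/2}\cdot 2\sin t\ .
$$

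The only remaining step is an algebraic simplification, which I expect to be routine rather than a genuine obstacle. Factoring out the constants and writing $\sin t=\sqrt{(1-\cos t)(1+\cos t)}$ (legitimate since $\sin t\ge 0$ on $[0,\pi]$), the factor $(1+\cos t)^{-1/2}$ cancels and one is left with $\frac1\pi\sqrt{(1-\cos t)^2}=\frac1\pi(1-\cos t)$, which is precisely the claimed density on $[0,\pi]$.

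The point to watch is the behaviour near the endpoint $t=\pi$, where $1+\cos t\to 0$ so that the individual factors $(y+1)^{-1/2}$ and $\sin t$ degenerate. However, the product stays finite: the zero of $\sin t$ at $t=\pi$ is exactly what cancels the singularity coming from $(1+\cos t)^{-1/2}$, and this cancellation is built into the simplification above. Hence no real difficulty arises and the resulting $p_T$ is continuous on all of $[0,\pi]$, integrating to $1$ as a quick check confirms.
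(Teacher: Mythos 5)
Your proposal is correct and follows exactly the route the paper takes: the paper's proof observes that the trace equals $2\cos t+1$, so that $p_T$ is the law of $\arccos\bigl(\frac{Y-1}{2}\bigr)$ with $Y$ distributed according to \paref{trace3}, and leaves the change-of-variable computation to the reader. You have simply carried out those ``elementary details'' explicitly (including the correct handling of the cancellation at $t=\pi$), so there is nothing to add.
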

\begin{proof}
It suffices to remark that if $t$ is the rotation angle of $g$, then its trace is equal to $2\cos t + 1$. $p_T$ is therefore the distribution of $W=\arccos ( \frac{Y-1}{2})$, $Y$ being distributed as \paref{trace3}. The elementary details are left to the reader.

\end{proof}
Now it is easy to compute the Fourier development of the function $d(\cdot, e)$.
\begin{prop}\label{kernel MB su SO(3)}
The kernel $d$ on $SO(3)$ is not restricted negative definite.
\end{prop}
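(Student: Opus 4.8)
The plan is to exhibit an irreducible representation whose Peter--Weyl coefficient $\alpha_\ell$ is strictly positive, since by Remark \ref{coeff neg} and Lemma \ref{fondamentale} this is exactly what blocks restricted negative definiteness. All the ingredients are already in place: by the Corollary the rotation angle has density $p_T(t)=\frac1\pi(1-\cos t)\,1_{[0,\pi]}(t)$, and the characters are $\chi_\ell(g)=1+2\sum_{j=1}^\ell\cos(jt)$. So the whole question reduces to the sign of
\begin{equation*}
\alpha_\ell=\frac1\pi\int_0^\pi t\,\Bigl(1+2\sum_{j=1}^\ell\cos(jt)\Bigr)(1-\cos t)\,dt\ .
\end{equation*}

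First I would compute the two elementary moment integrals that appear after expanding the product: $\int_0^\pi t\cos(jt)\,dt$ and $\int_0^\pi t\cos(jt)\cos t\,dt$. The first is a standard integration by parts, giving $\frac{(-1)^j-1}{j^2}$ (and $\pi^2/2$ when $j=0$); the second is handled by the product-to-sum identity $\cos(jt)\cos t=\tfrac12[\cos((j+1)t)+\cos((j-1)t)]$, reducing it to the first type. Assembling these, $\alpha_\ell$ becomes a finite sum of terms of the form $\frac{(-1)^m-1}{m^2}$, which vanish for even $m$ and contribute $-2/m^2$ for odd $m$; the constant term and the $j=1$ term coming from the $(1-\cos t)$ factor have to be tracked separately. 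I expect the outcome to be a closed expression of the shape $\alpha_\ell = c_0 + c_1\sum_{\text{odd }m\le \ell+1}\frac{1}{m^2}$ with an explicit constant.

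The only real obstacle is bookkeeping: making sure the $j=0$ constant term, the leading $j=\ell$ boundary contributions, and the shifts $j\pm1$ from the $\cos t$ factor are all combined without a sign slip. Rather than produce a fully general formula, the cleanest route is to evaluate $\alpha_\ell$ for one convenient small value — say $\ell=1$ — where the sum collapses to a handful of explicit terms, and check directly that the result is strictly positive. Since a single positive coefficient $\alpha_\ell$ already contradicts the criterion of Remark \ref{coeff neg}, this suffices to conclude that $d$ is not restricted negative definite on $SO(3)$, and hence (via Lemma \ref{fondamentale}) that the Brownian kernel $K$ fails to be positive definite there.
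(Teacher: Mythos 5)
Your overall strategy is exactly the paper's: expand $d(\cdot,e)$ in characters using the rotation-angle density $p_T$, reduce $\alpha_\ell$ to the elementary integrals $\int_0^\pi t\cos(jt)\,dt$ via product-to-sum, and invoke Remark \ref{coeff neg} once a single coefficient is shown positive. However, there is a genuine gap in the one concrete step you rely on: the coefficient you propose to check, $\ell=1$, is \emph{negative}, so your verification would fail. Indeed, since $1-\cos t=2\sin^2(t/2)$ and $\chi_\ell(t)=\sin\bigl((2\ell+1)t/2\bigr)/\sin(t/2)$, one has the telescoping identity $\chi_\ell(t)(1-\cos t)=\cos(\ell t)-\cos((\ell+1)t)$, whence
\begin{equation*}
\alpha_\ell=\frac1\pi\int_0^\pi t\bigl(\cos(\ell t)-\cos((\ell+1)t)\bigr)\,dt
=\frac1\pi\Bigl(\frac{(-1)^\ell-1}{\ell^2}-\frac{(-1)^{\ell+1}-1}{(\ell+1)^2}\Bigr)\ .
\end{equation*}
This equals $-\tfrac{2}{\pi\ell^2}<0$ for every \emph{odd} $\ell$ and $+\tfrac{2}{\pi(\ell+1)^2}>0$ for every \emph{even} $\ell\ge 2$. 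In particular $\alpha_1=-\tfrac2\pi<0$, which is perfectly consistent with restricted negative definiteness and proves nothing; the first coefficient that exhibits the failure is $\alpha_2=\tfrac{2}{9\pi}>0$, and that is precisely the one the paper computes. (Your guessed closed form $c_0+c_1\sum_{\text{odd }m\le\ell+1}\frac1{m^2}$ also misses this parity structure: the terms coming from the $\cos t$ factor survive only for even $m$, which is what makes the sign alternate with the parity of $\ell$.)

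So the fix is small but essential: carry out your computation at an even index, e.g.\ $\ell=2$, where $(1+2\cos t+2\cos 2t)(1-\cos t)=\cos 2t-\cos 3t$ gives $\alpha_2=\frac1\pi\int_0^\pi t(\cos 2t-\cos 3t)\,dt=\frac{2}{9\pi}>0$. With that replacement your argument is complete and coincides with the paper's proof.
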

\begin{proof}
It is enough to show that in the Fourier development
$$
d(g,e)=\sum_{\ell \ge 0} \alpha_\ell \chi_\ell(g)
$$
$\alpha_\ell > 0$ for some $\ell \ge 1$ (see Remark \ref{coeff neg}).
We have
$$
\dlines{ \alpha_\ell =\int_{SO(3)} d(g,e) \chi_\ell(g) dg = \frac 1\pi
\int_0^\pi t \Bigl( 1 + 2\sum_{m=1}^\ell \cos(mt) \Bigr) (1-\cos t)\, dt =\cr
=\frac{1}{\pi}\underbrace{\int_0^\pi t (1-\cos t)\, dt}_{:= I_1} + \frac{2}{\pi}\sum_{m=1}^{\ell} \underbrace{\int_0^\pi t \cos(mt)\, dt}_{:= I_2}
- \frac{2}{\pi}\sum_{m=1}^{\ell} \underbrace{\int_0^\pi t \cos(mt)\cos t\, dt}_{:= I_3}\ .\cr
}$$
Now integration by parts gives
$$
I_1 = \frac{\pi^2}{2} + 2,\quad
I_2= 
\frac{ (-1)^m -1}{m^2}\ \raise2pt\hbox{,}
$$
whereas, if $m\ne 1$, we have
$$
\dlines{I_3=\int_0^\pi t \cos(mt)\cos t\, dt 
= \frac{ m^2 +1}{ (m^2 -1)^2} ( (-1)^m +1)}
$$
and for $m=1$,
$$
I_3=\int_0^\pi t\cos^2 t\, dt=\frac {\pi^2}4\ .
$$
Putting things together we find
$$
\alpha_\ell =\frac{2}{\pi} \Bigl( 1 + \sum_{m=1}^{\ell} \frac{ (-1)^m -1}{m^2} + \sum_{m=2}^{\ell} \frac{ m^2 +1}{ (m^2 -1)^2} ( (-1)^m +1) \Bigr)\ .
$$
If $\ell=2$, for instance, we find $\alpha_2=\frac{2}{9\pi}>0$, but it is easy to see
that $\alpha_\ell>0$ for every $\ell$ even.

\end{proof}

Consider now the case $n>3$. $SO(n)$ contains a closed subgroup $H$ that is isomorphic to $SO(3)$ and
the restriction to $H$ of any bi-invariant distance $d$ on $SO(n)$ is a bi-invariant distance $\widetilde d$ on $SO(3)$. 
By Proposition \ref{kernel MB su SO(3)},  $\widetilde d$ is not
restricted negative definite, therefore there exist $g_1, g_2, \dots, g_m\in H$,
$\xi_1, \xi_2, \dots, \xi_m \in \R$ with $\sum_{i=1}^m \xi_i =0$ such that
\begin{equation}
\sum_{i,j} d(g_i, g_j) \xi_i \xi_j =\sum_{i,j} \widetilde d(g_i, g_j) \xi_i \xi_j > 0\ .
\end{equation}
We have therefore 
\begin{cor}\label{kernel MB su SO(n)}
Any bi-invariant distance $d$ on $SO(n), n\ge 3$ is not
a restricted negative definite kernel.
\end{cor}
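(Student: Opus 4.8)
The plan is to reduce the statement for general $n$ to the case $n=3$ already settled in Proposition \ref{kernel MB su SO(3)}, by a restriction argument. First I would realize $SO(3)$ as a closed subgroup $H$ of $SO(n)$, namely the copy acting as $SO(3)$ on the first three coordinates and trivially on the remaining $n-3$, so that $H$ consists of the block matrices $\begin{pmatrix} R & 0 \\ 0 & I_{n-3}\end{pmatrix}$ with $R\in SO(3)$. For any bi-invariant distance $d$ on $SO(n)$ the restriction $\tilde d:=d|_{H\times H}$ remains symmetric, nonnegative and satisfies the triangle inequality, and it is invariant under left and right translations by elements of $H$; hence $\tilde d$ is a bi-invariant distance on $H\cong SO(3)$.

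The decisive observation is that condition \paref{neg def} is hereditary: it quantifies over \emph{all} finite families of points of $SO(n)$, so by confining the $x_i$ to $H$ one sees that restricted negative definiteness of $d$ on $SO(n)$ would force restricted negative definiteness of $\tilde d$ on $H$. It therefore suffices to show that $\tilde d$ is \emph{not} restricted negative definite on $SO(3)$; the points $g_1,\dots,g_m\in H$ and coefficients $\xi_1,\dots,\xi_m$ with $\sum_i\xi_i=0$ witnessing this, read back into $SO(n)$, then violate \paref{neg def} for $d$ and finish the proof.

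To invoke Proposition \ref{kernel MB su SO(3)}, which was established only for the rotation-angle distance, I must identify $\tilde d$ with that distance up to a positive scalar, and this is the step I expect to be the genuine obstacle: the restriction of an ambient distance to a subgroup is a priori only \emph{some} bi-invariant distance $g\mapsto F(t)$ on $SO(3)$, and such an $F$ need not be proportional to $t$. What rescues the argument is the total geodesicity of $H$ in $SO(n)$: since a bi-invariant metric has the one-parameter subgroups $t\mapsto\exp(tX)$ as its geodesics through $e$, and every $X\in\mathfrak{so}(3)\subset\mathfrak{so}(n)$ generates a one-parameter subgroup contained in $H$, no minimizing ambient path between two elements of $H$ can be shortened by leaving $H$; hence $\tilde d$ is the intrinsic Riemannian distance on $H$ attached to the restricted metric. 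That restricted metric is itself a bi-invariant metric on $SO(3)$, so by the uniqueness up to a positive factor recalled in \S\ref{elem} it is a multiple of the negative Killing form, and the associated distance is a positive multiple of the rotation angle. As \paref{neg def} is unaffected by rescaling $d$ by a positive constant, Proposition \ref{kernel MB su SO(3)} applies to $\tilde d$ verbatim.
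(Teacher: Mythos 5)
Your overall strategy is exactly the paper's: the paper also takes the block copy $H\cong SO(3)$ inside $SO(n)$, notes that the restriction $\widetilde d$ of $d$ to $H$ is a bi-invariant distance on $SO(3)$, and then invokes Proposition \ref{kernel MB su SO(3)} to produce $g_1,\dots,g_m\in H$ and $\xi_i$ with $\sum_i\xi_i=0$ and $\sum_{i,j}d(g_i,g_j)\xi_i\xi_j=\sum_{i,j}\widetilde d(g_i,g_j)\xi_i\xi_j>0$. To your credit, you also put your finger on the point the paper passes over in silence: Proposition \ref{kernel MB su SO(3)} is proved only for the rotation-angle distance, so one must still identify $\widetilde d$, up to a positive factor, with that particular distance before it can be applied.

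However, the argument you give for that identification rests on a false principle: total geodesicity of $H$ does \emph{not} imply that the ambient distance restricted to $H$ equals the intrinsic distance of the induced metric. Total geodesicity says that geodesics of $H$ are ambient geodesics, which gives only the inequality $\widetilde d\le d_H$; it does not prevent an ambient \emph{minimizing} path between two points of $H$ from leaving $H$. Your reasoning, applied verbatim to the flat torus $G=\R^2/\Z^2$ and the closed one-parameter subgroup $H$ that is the image of $t\mapsto t(1,2)$, would give a false conclusion: $H$ is totally geodesic, yet the point $(1/2,0)\in H$ is at ambient distance $1/2$ from the identity while its intrinsic distance in $H$ is $\sqrt5/2$, because the ambient minimizing geodesic (a horizontal segment) leaves $H$. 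So the sentence ``no minimizing ambient path between two elements of $H$ can be shortened by leaving $H$'' establishes nothing; what is needed is that \emph{some} ambient minimizing path stays in $H$, and that is precisely what can fail. The needed identification is nevertheless true, but by a different argument. For $n\ne4$ the bi-invariant metric on $SO(n)$ is unique up to a factor (as recalled in \S\ref{elem}), so one may take $\langle X,Y\rangle=-\frac12\tr(XY)$ and use the formula $d(g,e)=\inf\{\|X\|:\exp X=g\}$ from \S\ref{elem}: if $g\in H$ is a rotation by angle $t\in[0,\pi]$ in a $2$-plane of $\R^3$, writing $X\in so(n)$ in canonical block form shows that $\exp X=g$ forces one block angle $\equiv\pm t$ and all others $\equiv0\pmod{2\pi}$, whence $\|X\|\ge t$, with equality for the generator of the rotation in that same $2$-plane, which lies in $so(3)$. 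Hence $\widetilde d$ is exactly the rotation angle and Proposition \ref{kernel MB su SO(3)} applies. For $n=4$, where bi-invariant metrics form a two-parameter family and the ambient metric need not be the Killing one, a separate computation is required (e.g.\ via the double cover $SU(2)\times SU(2)\to SO(4)$, which again yields $\widetilde d$ proportional to the rotation angle); your proof, like the paper's, does not address this case explicitly.
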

Remark that  the bi-invariant Riemannian metric on $SO(4)$ is not unique, meaning that it is not necessarily
proportional to the negative Killing form of $so(4)$.
In this case Corollary \ref{kernel MB su SO(n)} states that every such
bi-invariant distance cannot be restricted negative definite.
\section{Final remarks}\label{s-final}

We were intrigued by the different behavior of the invariant distance of $SU(2)$ and $SO(3)$ despite these groups
are locally isometric and decided to compute also for $SU(2)$ the development
\begin{equation}\label{sviluppo}
d(g,e) = \sum_{\ell} \alpha_\ell \chi_\ell(g)\ .
\end{equation}
This is not difficult as, denoting by $t$ the distance of $g$ from $e$, the characters of $SU(2)$ are
$$
\chi_\ell(g)=\frac{\sin((\ell+1)t)}{\sin t},\quad t\not=k\pi
$$
and $\chi_\ell(e)=\ell+1$ if $t=0$, $\chi_\ell(-)=(-1)^\ell(\ell+1)$ if $t=\pi$. Then it is elementary to compute, for $\ell>0$,
$$
\alpha_\ell =\frac 1\pi\,\int_0^\pi t\sin((\ell+1)t)\sin t\, dt=\begin{cases}
-\frac 8\pi\,\frac{m+1}{m^2(m+2)^2}&\ell\mbox{ odd}\cr
0&\ell\mbox{ even}
\end{cases}
$$
thus confirming the restricted negative definiteness of $d$ (see Remark \ref{coeff neg}). Remark also that the coefficients corresponding to the even numbered
representations, that are also representations of $SO(3)$, here vanish.

\subsection*{Acknowledgements}The authors wish to thank A.Iannuzzi and S.Trapani for
valuable assistance.

\bibliography{bibbase}

\providecommand{\bysame}{\leavevmode\hbox to3em{\hrulefill}\thinspace}
\providecommand{\MR}{\relax\ifhmode\unskip\space\fi MR }
\providecommand{\MRhref}[2]{%
  \href{http://www.ams.org/mathscinet-getitem?mr=#1}{#2}
}
\providecommand{\href}[2]{#2}
\begin{thebibliography}{10}

\bibitem{BM06}
P.~Baldi and D.~Marinucci, \emph{Some characterizations of the spherical
  harmonics coefficients for isotropic random fields}, Statist. Probab. Lett.
  \textbf{77} (2007), no.~5, 490--496.

\bibitem{MR2342708}
P.~Baldi, D.~Marinucci, and V.~S. Varadarajan, \emph{On the characterization of
  isotropic {G}aussian fields on homogeneous spaces of compact groups},
  Electron. Comm. Probab. \textbf{12} (2007), 291--302 (electronic).

\bibitem{faraut}
J.~Faraut, \emph{Analysis on {L}ie groups}, Cambridge Studies in Advanced
  Mathematics, vol. 110, Cambridge University Press, Cambridge, 2008.

\bibitem{MR2088027}
S.~Gallot, D.~Hulin, and J.~Lafontaine, \emph{Riemannian geometry}, third ed.,
  Universitext, Springer-Verlag, Berlin, 2004.

\bibitem{bib:LS}
N.~Leonenko and L.~Sakhno, \emph{On spectral representations of tensor random
  fields on the sphere}, Stoch. Anal. Appl. \textbf{30} (2012), no.~1, 44--66.

\bibitem{bib:L}
P.~L\'evy, \emph{Le mouvement brownien fonction d'un point de la sph\'ere de
  {R}iemann}, Rend. Circ. Mat. Palermo \textbf{8} (1959), 297--310.

\bibitem{bib:M}
A.~Malyarenko, \emph{Invariant random fields in vector bundles and application
  to cosmology}, Ann. Inst. Henri Poincar\'e Probab. Stat. \textbf{47} (2011),
  no.~4, 1068--1095.

\bibitem{dogiocam}
D.~Marinucci and G.~Peccati, \emph{Random {F}ields}, London Mathematical
  Society Lecture Note Series 389, Cambridge University Press, Cambridge, 2011.

\bibitem{MR2867816}
D.~Marinucci and I.~Wigman, \emph{On the area of excursion sets of spherical
  {G}aussian eigenfunctions}, J. Math. Phys. \textbf{52} (2011), no.~9, 093301,
  21.

\bibitem{MR0163331}
J.~Milnor, \emph{Morse theory}, Based on lecture notes by M. Spivak and R.
  Wells. Annals of Mathematics Studies, No. 51, Princeton University Press,
  Princeton, N.J., 1963.

\bibitem{Pepy}
G.~Peccati and J.-R. Pycke, \emph{Decompositions of stochastic processes based
  on irreducible group representations}, Teor. Veroyatn. Primen. \textbf{54}
  (2009), no.~2, 304--336.

\bibitem{GAN:60}
R.Gangolli, \emph{Positive definite kernels on homogeneous spaces and certain
  sthocastic processes related to l\'evy's brownian motion of several
  parameters}, Ann. I l'I.H.P. \textbf{section B, tome 3, n. 2} (1967),
  121--226.

\bibitem{bib:TKU}
I.~Kubo S.~Takenaka and H.~Urakawa, \emph{Brownian motion parametrized with
  metric space of constant curvature}, Nagoya Math. J. \textbf{82} (1981),
  131--140.

\bibitem{MR1501980}
I.~J. Schoenberg, \emph{Metric spaces and positive definite functions}, Trans.
  Amer. Math. Soc. \textbf{44} (1938), no.~3, 522--536.

\bibitem{sugiura}
M.~Sugiura, \emph{Unitary representations and harmonic analysis}, second ed.,
  North-Holland Mathematical Library, vol.~44, North-Holland Publishing Co.,
  Amsterdam, 1990.

\end{thebibliography}
\bibliographystyle{amsplain}
\end{document}